\newtheorem{lemma}{Lemma}
\newtheorem{theorem}{Theorem}
\newtheorem{corollary}{Corollary}
\title {Lower Bounds for the Cop Number\\ when the Robber is Fast}
\author{Abbas Mehrabian \\
\small Department of Combinatorics and Optimization \\
\small University of Waterloo \\
\small \texttt{amehrabian@uwaterloo.ca}}
\date{}
\begin{document}
\maketitle

\begin{abstract}
We consider a variant of the Cops and Robbers game where the robber can move $t$ edges at a time,
and show that in this variant, the cop number of a $d$-regular graph with girth larger than $2t+2$ is $\Omega(d^t)$.
By the known upper bounds on the order of cages, this implies that the cop number of a connected
$n$-vertex graph can be as large as $\Omega(n^{2/3})$ if $t \geq 2$, and $\Omega(n^{4/5})$ if $t \geq 4$.
This improves the $\Omega(n^{\frac{t-3}{t-2}})$ lower bound of Frieze, Krivelevich, and Loh (Variations on Cops and Robbers, preprint, 2010) when $2 \leq t \leq 6$.
We also conjecture a general upper bound $O(n^{t/t+1})$ for the cop number in this variant,
generalizing Meyniel's conjecture.
\end{abstract}

\section{Introduction}
The game of Cops and Robbers, introduced by Nowakowski and Winkler~\cite{game_definition_1} and independently
by Quilliot~\cite{game_definition_2}, is a perfect information game played on a finite graph $G$. There are two players,
a set of cops and a robber. Initially, the cops are placed onto vertices
of their choice in $G$ (where more than one cop can be placed at a vertex). Then the robber, being
fully aware of the cops' placement, positions herself in one of the vertices of $G$. Then the cops and
the robber move in alternate rounds, with the cops moving first; however, players are permitted to
remain stationary on their turn if they wish. The players use the edges of $G$ to move from vertex to
vertex.
The cops win and the game ends if eventually a cop steps into the vertex currently occupied
by the robber; otherwise, i.e., if the robber can elude the cops indefinitely, the robber wins.
The parameter of interest is the \emph{cop number} of $G$,
which is defined as the minimum number of cops needed to ensure that the cops can win.
We will assume that the graph $G$ is simple and connected, because deleting multiple edges or loops
does not affect the set of possible moves of the players,
and the cop number of a disconnected graph obviously equals the sum of the cop numbers for each connected component.

For a survey of results on the cop number and related search parameters, see the survey by Hahn~\cite{survey_hahn}.
The most well known open question in this area is Meyniel's conjecture, published by Frankl in
\cite{large_girth}. It states that for every graph $G$ on $n$ vertices, $O(\sqrt n)$ cops are enough to win.
This is asymptotically tight, i.e.~for every $n$ there exists an $n$-vertex graph with cop number $\Omega(\sqrt n)$.
The best upper bound found so far is $n2^{-\left(1-o(1)\right)\sqrt{\log_2 n}}$ (see~\cite{variations, lu_peng, scott_sudakov} for several proofs).

Here we consider the variant where in each move, the robber can take any path of length at most $t$ from her current position, but she is not allowed to pass through a vertex occupied by a cop. The parameter $t$ is called the \emph{speed} of the robber.
This variant was first considered by Fomin, Golovach, Kratochv{\'{\i}}l, Nisse, and Suchan~\cite{fast_robber_first_journal},
who proved that computing the cop number is NP-hard for every $t$.
Next, Frieze, Krivelevich, and Loh~\cite{variations} showed that the cop number of an $n$-vertex graph can be as large as $\Omega(n^{\frac{t-3}{t-2}})$.
They also asked whether there exist graphs with cop number $\omega(\sqrt n)$ if $t=2$.
We give a positive answer to this question, proving the existence of graphs with cop number $\Omega(n^{2/3})$ for $t \geq 2$,
and graphs with cop number
$\Omega(n^{4/5})$ for $t\geq 4$. This improves their bound $\Omega(n^{\frac{t-3}{t-2}})$ when $2 \leq t \leq 6$.
In Section~2 the lower bounds are proved, and in Section~3 a conjecture is proposed, predicting the asymptotic value of
cop number in this general setting.

\section{The lower bounds}

\begin{lemma}
Let $t,d$ be positive integers with $t \leq d+1$, $G$ be a $(d+1)$-regular graph with girth larger than $2t+2$ and $\alpha \in (0,1)$ be such that $\alpha d^t$ is an integer. Assume that the robber has speed $t$.
Then the cop number of $G$ is at least $\frac{\alpha(1-\alpha) d ^{2t}}{2(t+2)(d+1)^t}$.
\end{lemma}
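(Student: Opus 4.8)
The plan is to prove the contrapositive: exhibit a strategy by which the robber evades any set of $c$ cops whenever $c < \frac{\alpha(1-\alpha)d^{2t}}{2(t+2)(d+1)^t}$, so that winning genuinely requires at least this many cops. Throughout I would use the girth hypothesis in the form: since the girth exceeds $2t+2$, for every vertex $r$ the ball $B_{t+1}(r)$ spans a tree; in particular $B_t(r)$ is a tree in which the sphere of radius $k$ has exactly $(d+1)d^{k-1}$ vertices for $1\le k\le t$, every vertex at distance $\le t$ from $r$ is joined to $r$ by a unique path, and the number of ``leaves'' at distance exactly $t$ is $(d+1)d^{t-1}$. The basic safety observation is that, since the cops move first and each moves at most one edge, the robber is never captured on a turn at whose start she sits at distance at least $2$ from every cop; hence she survives forever provided she can, turn after turn, relocate along a cop-free path of length at most $t$ to a vertex again at distance at least $2$ from all (new) cop positions. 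So the whole game reduces to guaranteeing that such a \emph{safe reachable} destination always exists.

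The heart of the argument is a counting of safe reachable destinations among the depth-$t$ leaves of $B_t(r)$. A cop can spoil a leaf $v$ in two ways: by lying on the unique $r$--$v$ path (so $v$ is unreachable), or by lying within distance $1$ of $v$ (so $v$ is unsafe). Classifying a cop by its distance $j$ to $r$, a cop with $j\le t-1$ blocks the whole subtree below it, namely $d^{t-j}$ leaves; a cop with $j\in\{t,t+1\}$ spoils $O(1)$ leaves; and a cop with $j\ge t+2$ spoils none. I would package this into a weighted spoilage estimate, summing the contributions over the $O(t)$ relevant distance-shells (the plausible source of the $(t+2)$ factor) and normalizing against an upper bound of order $(d+1)^t$ on the number of length-$t$ walks emanating from a cop (the plausible source of the $(d+1)^t$ factor). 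The tension already visible here is the crux of the proof: a shallow cop (small $j$) spoils exponentially many leaves, so a naive single-turn count from a fixed center only rules out about $d^{t-1}$ cops and cannot by itself reach the stated $\Theta(d^t)$ bound.

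To cross from $d^{t-1}$ to $d^t$ I would exploit the speed asymmetry: the robber moves up to $t$ edges per turn while each cop moves only one, so she can repeatedly relocate her center into a region the cops have not managed to surround. Concretely, the robber maintains the quantitative invariant that she always has a reservoir of about $\alpha d^t$ safe reachable destinations (this is where the integrality of $\alpha d^t$ enters, as the reservoir is an honest set), and she moves so as to restore a reservoir of comparable size. The factor $\alpha(1-\alpha)$ is the signature of weighing this density-$\alpha$ pool of destinations against the density-$(1-\alpha)$ complement that the cops can exploit; the key inequality should bound, per cop and per turn, how much of the reservoir can be eroded, and show that when $c$ is below the stated threshold the robber's jump more than replenishes whatever the cops' one-edge advances consumed.

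The main obstacle I anticipate is exactly this maintenance step: proving that from every invariant-satisfying configuration with $c$ below the threshold a relocation restoring the invariant exists. The difficulty is to control the exponential subtree-blocking by cops that are momentarily close to the robber's center --- one must argue that she can always choose her length-$t$ jump so as to land where few cops lie within the critical radius, so that the dominant $d^{t-j}$ spoilage terms are never all activated at once. Making this simultaneous control precise, and checking that the resulting per-turn erosion is bounded by the quantity dual to $\frac{\alpha(1-\alpha)d^{2t}}{2(t+2)(d+1)^t}$, is where the real work lies; the tree structure furnished by the girth bound is what makes the subtree counts exact and keeps the estimate clean.
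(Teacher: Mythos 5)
You have the right skeleton --- an invariant asserting a reservoir of $\alpha d^t$ reachable, uncontrolled destinations at distance $t$, maintained turn by turn --- and you correctly diagnose that a naive leaf-count from a single center fails because a cop close to the center spoils $d^{t-j}$ leaves at once. But your proposal stops precisely at the step that constitutes the proof: you state that "making this simultaneous control precise \dots is where the real work lies" and never supply the argument. The missing idea in the paper is a two-level count over what it calls \emph{escaping paths}: paths of length $t$ whose first vertex lies in the reservoir $S$ and whose second vertex leaves $U$, the union of the $\alpha d^t$ uncontrolled paths from the current center $r$. There are exactly $d^t$ escaping paths per $s\in S$, hence $\alpha d^{2t}$ in total, and the invariant itself kills the shallow-cop problem you worry about: since $r$ was safe \emph{before} the cops moved (safety is defined via control, i.e.\ distance at most $1$, not occupation), no cop sits in $U$ after moving one step, and the girth bound $>2t+2$ forces $S$ to be independent with no vertex outside $U$ adjacent to two vertices of $S$. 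So each cop controls at most one vertex of $S$ (cost at most $(d+1)^t$ paths) plus $d+1$ other vertices, each on at most $t(d+1)^{t-1}$ escaping paths because for an internal vertex the start $u_1$ is determined by $u_2\notin U$; total at most $(t+2)(d+1)^t$ per cop. With fewer than $\frac{\alpha(1-\alpha)d^{2t}}{2(t+2)(d+1)^t}$ cops, fewer than $\alpha(1-\alpha)d^{2t}/2$ escaping paths are controlled, and averaging over the $\alpha d^t$ starting vertices (with a factor $2$ because a path has two endpoints) yields some $s\in S$ with at least $\alpha d^t$ uncontrolled escaping paths, whose far endpoints are distinct by girth $>2t$; the robber moves to $s$ and the invariant is restored. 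Nothing in your proposal plays the role of this averaging over the whole reservoir, and without it your per-turn erosion bound cannot be established: no relocation rule that only inspects spoilage around one candidate center at a time gets past the $d^{t-1}$ barrier you yourself identify.

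A secondary but real flaw is your safety bookkeeping. You define a safe reachable destination as one joined to the robber by a cop-free path with endpoint at distance at least $2$ from all cops; this is too weak to propagate, because a cop adjacent to an \emph{intermediate} vertex of such a path can step onto it during the cops' move, severing the route before the robber uses it. The paper's definition requires the entire path to be uncontrolled (no cop on or adjacent to any of its vertices) \emph{before} the cops move; this is exactly calibrated so that after each cop advances one edge the paths are still unoccupied and traversable, and it is also the hypothesis ("no cop in $U$") that powers the counting claim above. Your endpoint-only penalty of "$O(1)$ leaves per cop at distance $t$ or $t+1$" would need to be replaced by control of all $t+1$ vertices of every reserved path, which is what produces the paper's $(t+2)(d+1)^t$ bound and the denominator of the stated cop number.
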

\begin{proof}
Let us first define a few terms.
A cop \emph{controls} a vertex $v$ if the cop is on $v$ or on an adjacent vertex.
A cop controls a path if it controls a vertex of the path.
The cops control a path if there is a cop controlling it.
A vertex $r$ is \emph{safe} if there exists a set $S$ of vertices of size $\alpha d^t$ such that for each $s\in S$,
there is an $(r,s)$-path of length $t$ not controlled by the cops.

Assume that there are less than $\frac{\alpha(1-\alpha) d ^{2t}}{2(t+2)(d+1)^t}$ cops in the game, and we will show that the robber can elude forever.
We may assume that the cops all start in one vertex $u$, and the robber starts in a vertex $v$ at distance $t+1$ from $u$.
Let $N$ be the set of vertices at distance $t$ from $v$. Then by the girth condition, the cops control only one vertex
from $N$, and since $|N| > d^t$, $v$ is a safe vertex.
Hence we just need to show that if the robber is in a safe vertex before the cops move, then she can move to a safe vertex
after the cops move.

Assume that the robber is in a safe vertex $r$ after her last move. Then by definition
there exists a set $S$ of vertices of size $\alpha d^t$ such that for each $s\in S$,
there is an $(r,s)$-path of length $t$ not controlled by the cops.
Let $U$ be the set of all vertices of these paths.
Now, look at the situation after the cops move.
There is no cop in $U$,
thus the robber can move to any of the vertices in $S$ in her turn,
and it suffices to prove that there is a safe vertex in $S$.
Note that the girth of the graph is larger than $2t+2$, so $S$ is an independent set and
no vertex outside $U$ is adjacent to two distinct vertices of $S$.
By an \emph{escaping path} we mean a path of length $t$
with its first vertex in $S$ and second vertex not in $U$.
Clearly every $s\in S$ is the starting vertex of exactly $d^t$ escaping paths.

\noindent\textbf{Claim.}
{After the cops move, each cop controls at most $(t+2)(d+1)^t$ escaping paths.}

\begin{proof}
We first prove that every vertex $v$ is on at most $t(d+1)^{t-1} + (d+1)^t$ escaping paths,
and if $v \notin S$ then $v$ is on at most $t(d+1)^{t-1}$ escaping paths.
Let $u_1 u_2 u_3 \dots u_{t+1}$ be an escaping path with $u_1\in S$ and $u_2 \notin U$ such that $v$ is its $i$-th vertex, i.e.~$v=u_i$.
Assume first that $i \neq 1$.
Note that by definition we have $u_2 \notin U$, so $u_1$ is determined uniquely by $u_2$.
There are (at most) $d+1$ choices for each of $u_{i-1},\dots, u_2$, and for each of $u_{i+1},u_{i+2},\dots,u_{t+1}$.
Consequently, for each $2 \leq i \leq t+1$, $v$ is the $i$-th vertex of at most $(d+1)^{t-1}$ escaping paths,
so if $v \notin S$ then $v$ is on at most $t(d+1)^{t-1}$ escaping paths.
If $i=1$ then $v\in S$ and there are at most $d+1$ choices for each of $u_2,u_3,\dots,u_{t+1}$, thus each $v\in S$ is the first vertex of at most $(d+1)^t$ escaping paths.
This shows that $v$ is on at most $t(d+1)^{t-1} + (d+1)^t$ escaping paths.

Since the robber was in a safe vertex before the cops move, no cop is in $U$ at this moment.
Hence, each cop can control at most one vertex from $S$, through which he can control at most $(d+1)^t+t(d+1)^{t-1}$ escaping paths.
Through every other vertex he can control at most $t(d+1)^{t-1}$ escaping paths, and he controls $d+2$ vertices in total.
Therefore he controls no more than $(d+1)^t+(d+2)t(d+1)^{t-1} \leq (t+2)(d+1)^t$ escaping paths.
\end{proof}

Now, since there are less than $\frac{\alpha(1-\alpha) d ^{2t}}{2(t+2)(d+1)^t}$ cops in the game,
the cops control less than $\alpha(1-\alpha) d ^{2t} / 2$ of the escaping paths.
Since $S$ has $\alpha d^t$ vertices, and each path has two endpoints,
there must be an $s\in S$ such that at most $(1-\alpha)d^t$ escaping paths starting from $s$ are controlled.
Consequently, there are $\alpha d^t$ uncontrolled escaping paths starting from $s$.
Note that girth of $G$ is larger than $2t$ so the other endpoints of these paths are distinct.
Hence $s$ is safe by definition and the robber moves to $s$.
\end{proof}

\begin{corollary}\label{asymptotic}
Let $t$ be some fixed positive integer denoting the speed of the robber.
If $G$ is a $d$-regular graph (where $d \geq \max\{3,t\}$) with girth larger than $2t+2$,
then the cop number of $G$ is $\Omega(d^t)$.
\end{corollary}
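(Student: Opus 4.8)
The plan is to derive the corollary directly from the preceding lemma by matching the regularity parameters and then choosing $\alpha$ to be a suitable constant. The lemma is phrased for $(d+1)$-regular graphs, whereas the corollary concerns $d$-regular graphs, so the first step is to reindex: I would apply the lemma with its degree parameter set to $d-1$, so that the ambient graph becomes $((d-1)+1)=d$-regular, exactly as required.

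Before invoking the lemma I must check its hypotheses under this substitution. The girth condition (larger than $2t+2$) and the $d$-regularity are given directly. The lemma also requires $t \leq (d-1)+1 = d$, which holds because the corollary assumes $d \geq \max\{3,t\} \geq t$. Finally I need some $\alpha \in (0,1)$ with $\alpha(d-1)^t$ an integer. Since $t$ is fixed and the statement is asymptotic in $d$, I would take $\alpha$ to be a fixed value close to $1/2$; for concreteness $\alpha = \lfloor (d-1)^t/2\rfloor / (d-1)^t$ is admissible, it makes $\alpha(d-1)^t$ an integer, and it keeps $\alpha(1-\alpha)$ bounded below by a positive constant (indeed $\alpha \to 1/2$ as $d \to \infty$).

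With these choices the lemma yields that the cop number is at least $\frac{\alpha(1-\alpha)(d-1)^{2t}}{2(t+2)d^t}$, and the remaining step is routine simplification. Writing $\frac{(d-1)^{2t}}{d^t} = (d-2+1/d)^t = d^t(1-2/d+1/d^2)^t$, for fixed $t$ this equals $(1-o(1))d^t$ as $d \to \infty$, while the prefactor $\frac{\alpha(1-\alpha)}{2(t+2)}$ is a positive constant depending only on $t$; hence the bound is $\Omega(d^t)$. The only mild subtlety—the closest thing to an obstacle—is the integrality constraint on $\alpha(d-1)^t$, which prevents simply setting $\alpha=1/2$ when $d$ is even (so that $(d-1)^t$ is odd). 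This is harmless for the asymptotics, since the $\Omega$-estimate only requires $\alpha(1-\alpha)$ to stay bounded away from $0$ as $d$ grows, so any constant $\alpha$ in a fixed subinterval of $(0,1)$ satisfying the integrality condition suffices.
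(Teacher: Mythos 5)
Your proposal is correct and matches the paper's (implicit) derivation: the corollary follows from the Lemma by the reindexing $d \mapsto d-1$, choosing $\alpha = \lfloor (d-1)^t/2 \rfloor/(d-1)^t$ to satisfy the integrality constraint while keeping $\alpha(1-\alpha)$ bounded away from $0$, and noting $(d-1)^{2t}/d^t = \Theta(d^t)$ for fixed $t$. You also correctly verified the hypothesis $t \leq d$ from $d \geq \max\{3,t\}$ (and $d \geq 3$ guarantees $\alpha \in (0,1)$), so nothing is missing.
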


In order to use Corollary~\ref{asymptotic} to prove interesting lower bounds for the cop number, one should look at vertex-minimal regular graphs with large girth, known as \emph{cages}. Here are two useful results on cages (see~\cite{cage_survey} for a survey):

\begin{theorem}[\cite{cages_upper_bound}]\label{girth7}
Let $g \geq 5$, and $d\geq 3$ be an odd prime power. Then there exists a $d$-regular graph of girth $g$ with at most $2d^{1+\frac34g-a}$ vertices, where $a=4,11/4,7/2,13/4$ for $g \equiv 0,1,2,3 \pmod 4$, respectively.
\end{theorem}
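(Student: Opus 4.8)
The plan is to produce the required graph by an explicit algebraic construction rather than by a counting or greedy argument, since the target exponent $\tfrac34 g$ lies well below the $\Theta(g)$ exponent that probabilistic methods (e.g.\ the Erd\H{o}s--Sachs bound) yield. I would use the family of bipartite graphs $D(k,q)$ introduced by Lazebnik, Ustimenko and Woldar. Here $q=d$ is the given odd prime power, and both the ``points'' and the ``lines'' are indexed by $\mathrm{GF}(q)^{k}$ for a parameter $k$ to be fixed later. A point $(p)$ and a line $[l]$ are declared adjacent exactly when their coordinates satisfy a prescribed system of ``staircase'' equations, each of which expresses one new line-coordinate as the corresponding point-coordinate plus a bilinear term in previously determined coordinates. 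The first step is to read off $q$-regularity directly from this recursive shape: fixing a point together with the single free coordinate $l_1$ of a neighbouring line determines every remaining coordinate of that line uniquely, so each point has exactly $q$ neighbours, and symmetrically for lines.

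The heart of the argument, and the step I expect to be the main obstacle, is the girth lower bound: I would prove that $D(k,q)$, and hence each of its subgraphs, has girth at least $k+5$. The natural approach is to suppose a cycle of length less than $k+5$ exists, encode it as a closed walk alternating between points and lines, and track how the coordinate-differences propagate along the walk using the defining equations. Because the system is triangular, each successive relation activating a genuinely new coordinate, a short closed walk forces a chain of these differences to vanish one after another, and one shows that the only way to close up within fewer than $k+5$ steps is the trivial backtracking walk. Making this rigorous requires careful bookkeeping of which coordinate is activated at each step together with an induction on the length of the walk; this combinatorial-algebraic cycle analysis is the technically delicate part.

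Next I would control the number of vertices. As it stands $D(k,q)$ has $2q^{k}$ vertices, which is too many; the saving comes from the fact that $D(k,q)$ is disconnected. I would exhibit a family of roughly $k/4$ independent invariants, namely certain polynomial functions of the coordinates that stay constant along every edge, and use them to split $D(k,q)$ into components. Counting the fibres of the resulting map shows that each connected component $CD(k,q)$ remains $q$-regular with girth at least $k+5$, yet now has at most $2q^{\,k-\lfloor (k+2)/4\rfloor+1}$ vertices. This is where the factor $\tfrac34$ in the exponent is created, since $k-\lfloor(k+2)/4\rfloor+1\approx\tfrac34 k$.

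Finally I would optimise the parameter. To force girth at least $g$ it suffices to take $k$ to be the least admissible value with $k+5\ge g$, namely $k=g-5$; substituting this into the component bound $k-\lfloor(k+2)/4\rfloor+1$ and simplifying the floor according to the residue of $g$ modulo $4$ yields exactly the exponent $1+\tfrac34 g-a$ with $a=4,\,11/4,\,7/2,\,13/4$ for $g\equiv 0,1,2,3\pmod 4$. Since $q=d$ is assumed to be an odd prime power the entire construction is available over $\mathrm{GF}(q)$, and tracking the leading constant through the component count gives the claimed bound of at most $2d^{\,1+\frac34 g-a}$ vertices.
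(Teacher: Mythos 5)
The first thing to note is that the paper contains no proof of this statement at all: it is quoted verbatim from \cite{cages_upper_bound} (Lazebnik, Ustimenko, and Woldar) as an external ingredient, so there is no internal argument to compare against. Your proposal is, in outline, exactly the proof given in that cited source: the bipartite graphs $D(k,q)$ over $\mathrm{GF}(q)$ defined by a triangular (``staircase'') system of equations, $q$-regularity read off from the fact that one free coordinate determines a neighbour uniquely, the girth lower bound of $k+5$ proved by tracking coordinate differences along a hypothetical short closed walk, and the crucial saving via edge-invariant polynomial functions that split $D(k,q)$ into components $CD(k,q)$ of order at most $2q^{\,k-\lfloor (k+2)/4\rfloor+1}$. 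So the route is the right one, and it matches the source rather than deviating from it.

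One concrete slip sits in your final optimization step. The girth bound ``at least $k+5$'' for $D(k,q)$ is established only for \emph{odd} $k$, so you cannot uniformly set $k=g-5$: that choice has the correct parity only when $g$ is even ($g\equiv 0,2\pmod 4$), while for odd $g$ you must take $k=g-4$, overshooting the girth requirement by one. This parity adjustment, combined with the floor in $k-\lfloor(k+2)/4\rfloor+1$, is precisely what generates the four residue-dependent constants: for instance, $g\equiv 1\pmod 4$ with $k=g-4$ gives exponent $(3g-7)/4=1+\tfrac34 g-\tfrac{11}{4}$, matching $a=11/4$, whereas your $k=g-5$ would (were it admissible) yield $a=15/4$, a bound \emph{stronger} than the theorem actually states --- a sign the parameter bookkeeping has gone wrong. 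You should also note the boundary cases $g=5,6$, where your formula would demand $k\leq 1$, below the range in which $D(k,q)$ and the component count make sense; the source covers these via $D(2,q)$ (equivalently, classical girth-$6$ incidence structures), which gives the exponent $2$ agreeing with the stated values of $a$. With the choice of $k$ corrected to the least \emph{odd} integer satisfying $k+5\geq g$ and the small-$g$ cases handled separately, your outline is a faithful reconstruction of the cited proof.
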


\begin{theorem}[\cite{girth_12}]\label{girth12}
Let $d\geq 3$ be a prime power. Then there exists a $d$-regular graph with girth 12 and at most $2d^5$ vertices.
\end{theorem}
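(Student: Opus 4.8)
The plan is to realize the graph as the point--line incidence graph of a rank-two incidence geometry whose girth is controlled, taking the field order to be $q=d$. The natural candidate is the split Cayley generalized hexagon $H(q)$, which exists for every prime power $q$: its incidence graph is bipartite and $(q+1)$-regular with girth exactly $12$ and $2(q^5+q^4+q^3+q^2+q+1)$ vertices. Since we need a graph that is $d$-regular for $d=q$ a prime power, rather than $(q+1)$-regular, I would instead use the \emph{biaffine} part of $H(q)$, equivalently the explicit algebraic graph $D(5,q)$ of Lazebnik, Ustimenko and Woldar: its two colour classes are copies of $GF(q)^5$, and incidence is given by a system of bilinear equations in the coordinates. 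The three things to check are that this graph is $d$-regular, has at most $2d^5$ vertices, and has girth $12$.

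Regularity and the vertex count are immediate from the construction and require no real work. Each colour class is indexed by $GF(q)^5$, so there are $2q^5=2d^5$ vertices. In the incidence equations the first coordinate of a neighbour may be chosen freely and then the remaining four coordinates are uniquely determined; hence every vertex has exactly $q=d$ neighbours, and the graph is $d$-regular. If one instead starts from the full hexagon $H(q)$, the same conclusions follow once one deletes the appropriate ``vertices at infinity'' to pass from degree $q+1$ to degree $q$, and the vertex count only drops, staying below $2d^5$.

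The crux, and the main obstacle, is the girth, and here one must rule out closed walks of every even length $4,6,8,10$, since a bipartite graph has only even cycles. The cleanest route is to inherit the bound from the ambient hexagon: the biaffine graph is a subgraph of the incidence graph of $H(q)$, which has girth $12$, so its girth is automatically at least $12$; exhibiting a single $12$-cycle then pins the girth at exactly $12$. If one prefers a self-contained argument working only with the algebraic definition, the equivalent task is to show that the bilinear incidence system forces any short closed walk to backtrack, i.e.\ to be trivial; this is where all the computational content lies, and it amounts to a coordinate-by-coordinate elimination showing that a nontrivial $2k$-cycle with $2k<12$ would yield an inconsistent system over $GF(q)$. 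Finally, setting $q=d$ gives a $d$-regular graph of girth $12$ on at most $2d^5$ vertices, as required.
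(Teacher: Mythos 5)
The paper itself gives no proof here---Theorem~\ref{girth12} is imported wholesale from \cite{girth_12}---and the construction behind that citation is exactly your primary route: the biaffine part of a generalized hexagon of order $(q,q)$, available for every prime power $q$ via the split Cayley hexagon $H(q)$. So the approach is the right one; the genuine flaw is the claimed equivalence between that biaffine part and the algebraic graph $D(5,q)$, and it sits precisely where you located ``all the computational content.'' The identification is valid one level down ($D(2,q)$ and $D(3,q)$ are biaffine parts of the projective plane and of a generalized quadrangle), but it fails for the hexagon: the proven girth bound for $D(k,q)$ with $k$ odd is only $k+5$, so for $D(5,q)$ one gets girth at least $10$, not $12$, and for many $q$ its girth is known to be exactly $10$. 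Within that family, girth at least $12$ is first guaranteed at $k=7$, whose connected components have $2q^6$ vertices---precisely the weaker bound that Theorem~\ref{girth7} with $g=12$ already yields, and which Theorem~\ref{girth12} exists to beat. As written, your argument verifies regularity and the vertex count on $D(5,q)$ but inherits the girth from $H(q)$: it silently switches graphs between the easy steps and the crux, and once the false equivalence is removed, neither object has all three properties established. Note also that the fallback clause ``the vertex count only drops, staying below $2d^5$'' is wrong as stated: $H(q)$ has $2(q^5+q^4+q^3+q^2+q+1)>2d^5$ vertices, so the bound is not inherited by deletion; the survivors must be counted.

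The repair is to commit to $H(q)$ and make ``delete the vertices at infinity'' precise, since regularity and the count are exactly the steps that use the hexagon axioms. Fix a flag $(p_0,L_0)$ and keep the points opposite $p_0$ and the lines opposite $L_0$ (distance $6$ in the incidence graph); the standard counts in a generalized hexagon of order $(q,q)$ give exactly $q^5$ of each. A kept point $p$ satisfies $d(p,L_0)=5$, and since vertices at distance less than $6$ in a generalized hexagon are joined by a unique geodesic, exactly one of the $q+1$ lines through $p$ lies at distance $4$ from $L_0$; the other $q$ are opposite $L_0$ and survive. Hence the induced graph is $q$-regular on $2q^5$ vertices, with girth at least $12$ because it is a subgraph of the incidence graph of $H(q)$. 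The $12$-cycle you defer is a non-issue for $q\ge 4$: a $q$-regular bipartite graph of girth at least $14$ would need at least $2\bigl(1+(q-1)+\cdots+(q-1)^6\bigr)>2q^5$ vertices by the Moore bound, so the girth is exactly $12$ there; and for the paper's purposes girth at least $12$ already suffices, since Corollary~\ref{asymptotic} with $t=4$ only needs girth larger than $10$. Finally, if the biaffine part is disconnected, pass to one component---it remains $q$-regular with girth at least $12$ and at most $2q^5$ vertices---which is also what keeps the graph $G$ built in Theorem~3(b) (attaching a path to a vertex of $H$) connected.
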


\begin{theorem}
Let $t$ be some fixed positive integer denoting the speed of the robber.

(a) If $t\geq 2$ then for every $n$ there exists an $n$-vertex graph with cop number  $\Omega(n^{2/3})$.

(b) If $t\geq 4$ then for every $n$ there exists an $n$-vertex graph with cop number  $\Omega(n^{4/5})$.
\end{theorem}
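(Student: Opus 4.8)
The plan is to combine Corollary~\ref{asymptotic} with the explicit cage constructions of Theorem~\ref{girth7} and Theorem~\ref{girth12}. Corollary~\ref{asymptotic} tells us that a $d$-regular graph with girth larger than $2t+2$ has cop number $\Omega(d^t)$; the whole game is therefore to pick a cage with as few vertices as possible relative to $d$, so that $d^t$ is as large a power of the vertex count $n$ as possible. Concretely, if the chosen graph has $N$ vertices with $N = \Theta(d^{k})$ for some exponent $k$ depending on the required girth, then the cop number is $\Omega(d^t) = \Omega(N^{t/k})$, and I want $t/k$ to hit the target exponents $2/3$ and $4/5$.

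For part (a) I would use Theorem~\ref{girth7} with the smallest girth exceeding $2t+2$ that the corollary demands. Since $t \geq 2$, I need girth larger than $6$, so girth $g = 7$ suffices for the case $t=2$; here $g \equiv 3 \pmod 4$ gives $a = 13/4$, so the number of vertices is at most $2d^{1 + \frac{3}{4}\cdot 7 - 13/4} = 2d^{1 + 21/4 - 13/4} = 2d^{3}$. Thus $N = O(d^3)$, i.e.\ $d = \Omega(N^{1/3})$, and the cop number is $\Omega(d^2) = \Omega(N^{2/3})$. This handles $t = 2$; for larger $t$ one expects the same or a better exponent, but the cleanest argument is to observe that a robber of speed $t \geq 2$ is at least as powerful as one of speed $2$ on the same graph (more reachable paths can only help the robber), so the cop number is monotone in $t$ and the $t=2$ bound transfers to all $t \geq 2$. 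For part (b), with $t \geq 4$ I need girth larger than $10$, so I would invoke Theorem~\ref{girth12} to get a $d$-regular graph of girth $12$ on at most $2d^5$ vertices. Then $N = O(d^5)$, so $d = \Omega(N^{1/5})$, and for $t = 4$ the cop number is $\Omega(d^4) = \Omega(N^{4/5})$; again monotonicity in $t$ extends this to all $t \geq 4$.

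Two routine bookkeeping points need care. First, the cage theorems require $d$ (or $d\pm 1$) to be a prime power, so for a general target $n$ I cannot hit every $d$ exactly; the standard fix is to choose a prime power $d$ in the right range (using that consecutive prime powers are within a constant factor, e.g.\ by Bertrand's postulate) so that the resulting graph has $\Theta(n)$ vertices, and then pad to exactly $n$ vertices by adding isolated or pendant structure without decreasing the cop number of the dense core. Second, the corollary was stated for $d$-regular graphs whereas the lemma it rests on is phrased for $(d+1)$-regular graphs; since these differ only by a shift in the degree parameter and the asymptotics in $d$ are unaffected, I would simply apply the corollary as a black box.

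The main obstacle is not any deep idea but the matching of parameters: verifying that the girth required by the corollary (larger than $2t+2$) is compatible with the girth available from the cage theorems at the exponent that yields the claimed bound, and confirming that the exponent arithmetic in Theorem~\ref{girth7} really produces $N = O(d^3)$ for the relevant residue class of $g$ modulo $4$. Once the correct girth value and the associated constant $a$ are pinned down, the rest is substitution. I expect the monotonicity-in-$t$ observation to be the cleanest way to avoid re-deriving a separate cage for each value of $t$.
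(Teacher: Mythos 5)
Your proposal is correct and follows essentially the same route as the paper: reduce to $t=2$ and $t=4$ by monotonicity of the cop number in the robber's speed, apply Corollary~\ref{asymptotic} to the girth-$7$ and girth-$12$ cages of Theorems~\ref{girth7} and~\ref{girth12} (with the same exponent arithmetic, $a=13/4$ giving $2d^3$ vertices), handle prime density via Bertrand's postulate, and pad to exactly $n$ vertices with a pendant path. The only nitpick is that padding with \emph{isolated} vertices would violate connectivity, so stick with the pendant path, as the paper does.
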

\begin{proof}
(a) As the cop number will not decrease when the speed of the robber is increased, we just need to show the proposition for $t=2$.
Let $n\geq 54$ and $d$ be the largest prime number such that $2d^3 \leq n$. Since there exists a prime between $d$ and $2d$, we have $n < 2(2d)^3$ so $d = \Theta(n^{1/3})$.
By Theorem~\ref{girth7}, there exists a $d$-regular graph $H$ of girth 7 with at most $2d^3$ vertices.
By Corollary~\ref{asymptotic} the cop number of $H$ is $\Omega(d^2) = \Omega(n^{2/3})$.
Let $G$ be the graph formed by joining some vertex of $H$ to an endpoint of a disjoint path with ${n-|V(H)|}$ vertices.
It is easy to check that the cop number of $G$ equals the cop number of $H$, which is $\Omega(n^{2/3})$.

(b) Again we just need to show the proposition for $t=4$.
Let $n\geq 486$ and $d$ be the largest prime number such that $2d^5 \leq n$. A similar argument shows that $d = \Theta(n^{1/5})$.
By Theorem~\ref{girth12}, there exists a $d$-regular graph $H$ of girth 12 with at most $2d^5$ vertices.
By Corollary~\ref{asymptotic} the cop number of $H$ is $\Omega(d^4) = \Omega(n^{4/5})$.
Let $G$ be the graph formed by joining some vertex of $H$ to an endpoint of a disjoint path with ${n-|V(H)|}$ vertices.
Then the cop number of $G$ equals the cop number of $H$, which is $\Omega(n^{4/5})$.
\end{proof}

\section{Concluding remarks}
Let $f_t(n)$ be the maximum possible cop number of a connected $n$-vertex  graph assuming the robber has speed $t$.
It is well-known (and also follows from Corollary~\ref{asymptotic} and Theorem~\ref{girth7} with $g=5$) that $f_1(n) = \Omega(\sqrt n)$.
Meyniel conjectured that indeed $f_1(n) = \Theta(\sqrt n)$.
Frieze, Krivelevich, and Loh~\cite{variations} showed that $f_t(n) = \Omega(n^{\frac{t-3}{t-2}})$ if $t \geq 3$.
In this note we proved that $f_2(n) = \Omega(n^{2/3})$ and $f_4(n)= \Omega(n^{4/5})$.
A natural question is that of the asymptotic behavior of $f_t(n)$.

Notice that if $G$ is a $d$-regular graph with girth larger than $2t+2$, then Moore's bound gives $d = O(n^{1/t+1})$.
Hence Corollary~\ref{asymptotic} cannot give a better bound than $f_t(n) = \Omega(n^{t/t+1})$.
Generalizing Meyniel's conjecture, we conjecture that this is actually the asymptotic behavior of $f_t(n)$.

\noindent\textbf{Conjecture.}
{For every fixed $t$ we have $f_t(n) = \Theta(n^{t/t+1})$.}

Proving better upper bounds on the order of cages would imply that the conjecture is tight.
Specifically, if for a fixed $t$, and infinitely many $d$,
there exists a $d$-regular graph with girth larger than $2t+2$
on $O(d^{t+1})$ vertices, then $f_t(n) = \Omega(n^{t/t+1})$ (see Corollary~\ref{asymptotic}).

\noindent\textbf{Acknowledgement.}
The author thanks Nick~Wormald for his suggestions on improving the presentation.

\noindent\textbf{Addendum.}
Alon and the author~\cite{meyniel_generalize} have recently extended the result of this note,
and proved that $f_t(n) = \Omega(n^{t/t+1})$ for every fixed positive integer $t$.

\bibliographystyle{amsplain}
\bibliography{graphsearching}

\end{document}